\numberwithin{equation}{section}
\numberwithin{figure}{section}
\theoremstyle{plain}
\newtheorem{theorem}{Theorem}\numberwithin{theorem}{section}
\newtheorem{lemma}{Lemma}\numberwithin{lemma}{section}
\numberwithin{proposition}{section}
\newtheorem{corollary}{Corollary}\numberwithin{corollary}{section}
\theoremstyle{definition}
\numberwithin{definition}{section}
\theoremstyle{remark}
\newtheorem{remark}{Remark}\numberwithin{remark}{section}
\newcommand{\R}{\mathbb{R}}
\title{Positivity of oscillatory integrals\\and Hankel transforms}
\author{Yong-Kum Cho\footnote{corresponding author. ykcho@cau.ac.kr. Department of Mathematics, College of Natural Sciences, Chung-Ang University,
84 Heukseok-Ro, Dongjak-Gu, Seoul 06974, Korea.}
\and  Seok-Young Chung\footnote{sychung@knights.ucf.edu. Department of Mathematics, University of Central Florida,
4393 Andromeda Loop N., Orlando, FL 32816, USA.}\\
\and Young Woong Park\footnote{ywpark1839@gmail.com. Department of Mathematics,  Chung-Ang University.}}
\date{}
\begin{document}

\maketitle

{\bf Abstract.}  In consideration of the integral transform
whose kernel arises as an oscillatory solution of certain second-order linear differential equation,
its positivity is investigated on the basis of Sturm's theory.
As applications, positivity criteria are obtained for Hankel transforms as well as trigonometric
integrals defined on the positive real line.

\medskip

{\bf Keywords.} {Bessel functions; Fourier transforms; Hankel transforms;}

{oscillatory; positivity; Sturm's oscillation theory.}

\medskip

{\bf AMS Subject Classifications.} {34C10; 42A38; 44A20; 33C10.}

\section{Introduction}
In 1823, C. Sturm \cite{S} founded an oscillation theory for the
second-order linear differential equation expressible in the normal form
\begin{equation}\label{ST1}
u'' + \phi(t) u =0,\quad t>0.
\end{equation}

In this paper we consider the integral transform of the type
\begin{equation}\label{OT1}
(U f)(x) = \int_0^\infty f(t) u(xt)\,dt \qquad(x>0),
\end{equation}
where $f(t)$ is a nonnegative function for $\,t>0\,$ such that the integral exists and
the kernel $u(t)$ is an oscillatory solution of the differential equation \eqref{ST1}, and aim to investigate if the inequality
$\,(Uf)(x)>0\,$ holds true for all $\,x>0\,$ on the basis of Sturm's oscillation theory.

A great deal of functions encountered in mathematical physics, which are oscillatory in nature, arise
as solutions of \eqref{ST1} or images of \eqref{OT1}. In the special case $\,\phi(t)=1,\,t>0,$ for instance,
\eqref{OT1} gives rise to Fourier sine or cosine transforms on the positive real line. Concerning
positivity, a classical theorem due to P\'{o}lya \cite{P} states that if $f(t)$ is a continuous, positive and decreasing function
such that $\,\int_0^\infty f(t) dt\,$ exists, then
\begin{align}\label{Po1}
\left\{\begin{aligned}
&{\,{\rm (i)}\,\quad \int_0^\infty f(t)\sin (xt)\,dt >0,}\\
&{{\rm (ii)}\quad \int_0^\infty f(t)\cos (xt)\,dt >0,}\end{aligned}\right.
\end{align}
for all $\,x>0,$ where (ii) holds true under the additional assumption that $f(t)$ is convex for $\,t>0\,$
 (see Tuck \cite{T} for related issues).

Another special case is the Hankel transform defined by
\begin{equation}\label{H1}
(H_\nu f)(x) = \int_0^\infty f(t)J_\nu(xt)\sqrt{xt}\, dt\qquad(x>0),
\end{equation}
where $J_\nu(z)$ stands for the Bessel function of the first kind and
order $\nu$ is assumed to be real with $\,\nu>-1.$
Owing to identities \cite[3.4]{Wa},
\begin{align*}
J_{-1/2}(z) =\sqrt{\frac{2}{\pi z}\,}\cos z, \,\, J_{1/2}(z) = \sqrt{\frac{2}{\pi z}\,}\sin z,
\end{align*}
it reduces to Fourier transforms of type \eqref{Po1} when $\,\nu=\pm 1/2.$

As it will become clearer in the sequel, we deduce positivity of $(Uf)(x)$ by decomposing it into sums of integrals over intervals
between consecutive zeros of $u(t)$ and by comparing arches of graph of integrands. An advantage of this approach is that sufficient
conditions on $f(t)$ for positivity are mild and analogous to aforementioned P\'olya's conditions. Our method,
however, requires that $\,u(0+) =0\,$ and zeros of $u(t)$ must be enumerable in ascending order of magnitude, which
could be a drawback in practice.

Concerning positivity of Hankel transforms, we remark that Zhang \cite{Z} has recently obtained a set of positivity criteria applicable to
certain class of completely monotone functions. Unlike our method, his approaches are based on Laplace transforms of Bessel functions.

The positivity arises often as a critical issue in various disguises. For example, it is known that
the radial function $\,F({\bf x}) = f(\|{\bf x}\|), \,{\bf x}\in\R^d,$ is positive definite on $\R^d$ if
its Fourier transform is nonnegative. In view of the well-known Fourier transform formula
 \begin{align*}
 \widehat{F}(\xi) = (2\pi)^{\frac d2 }\|\xi\|^{1-\frac d2} \int_0^\infty t^{\frac d2} f(t)  J_{\frac{d-2}{2}}(\|\xi\| t)\,dt \qquad (\xi\in\R^d)
 \end{align*}
 and the fact that any Hankel transform of order $\,\nu>\mu>-1\,$ can be written as the Hankel transform of order $\mu$,
it is  positivity of Hankel transforms of order $\,\nu\ge  (d-2)/2\, $ that plays a decisive role
 in determining the positive definiteness of radial functions on $\R^d$ (see \cite{SW, We} for details).

The present paper is organized as follows. We revisit Sturm's oscillation theory in section 2 with our focuses on the distribution
of zeros and the wave of arches of oscillatory solutions. We establish P\'olya-type positivity criteria for the oscillatory integral
\eqref{OT1} in section 3, one of which will be illustrated with Fourier transforms in section 4. We present main applications to Hankel
transforms in section 5 and discuss $Y$-transforms in the last section.

\section{Sturm's oscillation theory}
This section aims to revisit some of Sturm's oscillation theory, original and extended,
which are relevant to the present work. As standard, given an interval $I$,
a function is said to be \emph{oscillatory} on $I$
when it changes sign infinitely often. If the sign change occurs at most finitely many times,
it is said to be \emph{non-oscillatory} on $I$. In what follows, it will be assumed that the underlying
function $\phi(t)$ in \eqref{ST1} is continuous for $\,t>0.$

\subsection{Oscillations with scattered zeros}
One of classical Sturm's theorems states that if there exist $\,a>0\,$ and $\,m>0\,$ such that
$\,\phi(t)\ge m\,$ for all $\,t\ge a,$ then any non-trivial solution of \eqref{ST1} is oscillatory on the interval $[a, \infty)$.
As an alternative, it is Kneser \cite{K}, Fite \cite{F} and Hille \cite{H} who proposed the following weaker conditions:
\begin{align*}
&{\rm(A1)}\quad  \liminf_{t\,\to\,\infty}\big[t^2\phi(t)\big]>\frac 14.\\
&{\rm(A2)} \quad \int_a^\infty \phi(t) dt = +\infty.\\
&{\rm(A3)}\quad  \liminf_{t\,\to\,\infty}\left[t\int_t^\infty\phi(s) ds\right]>\frac 14.
\end{align*}

\begin{lemma}\label{LemmaST1}
Suppose that $\,\phi(t)\ge 0\,$ for all $\,t\ge a,$ where $\,a>0,$ and one of assumptions {\rm (A1)}, {\rm (A2)}, {\rm (A3)} holds true.
Then any non-trivial solution $u(t)$ of \eqref{ST1} is oscillatory on $[a, \infty).$
Moreover, $u(t)$ has an infinity of zeros which are all simple, not concentrated on a finite subinterval of $[a, \infty)$
but scattered over $[a, \infty)$. In particular, the zeros of $u(t)$ are enumerable in ascending
order of magnitude on $[a, \infty).$
\end{lemma}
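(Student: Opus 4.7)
The plan is to establish the four claims in the order they are stated: oscillation on $[a,\infty)$, simplicity of each zero, absence of a finite accumulation point, and enumerability. The substantive content lies in the first claim, which must be verified under each of the hypotheses (A1), (A2), (A3) separately. For (A1) I would invoke Sturm's classical comparison theorem against the Euler-type equation $v'' + (c/t^2)v = 0$ with $1/4 < c < \liminf_{t\to\infty} t^2\phi(t)$; this comparison equation admits the explicit oscillatory solution $\sqrt{t}\sin\bigl(\sqrt{c-1/4}\,\log t\bigr)$, so every non-trivial solution of \eqref{ST1} must vanish between consecutive zeros of that function, producing infinitely many zeros in $[a,\infty)$. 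For (A2) and (A3) I would argue by contradiction, assuming $u > 0$ on some $[T,\infty)$, setting $w = u'/u$, and exploiting the Riccati equation $w' + w^2 + \phi = 0$: integrating from $T$ outward and using (A2) forces $w(t) \to -\infty$ at a rate incompatible with $u$ remaining of one sign, while (A3) calls for the more delicate Hille-type estimate relating $t\int_t^\infty \phi$ to $1/4$, which I would sketch and reference to \cite{H}.

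Once oscillation is in hand, the three remaining claims follow by short structural arguments. Simplicity of each zero is the uniqueness theorem for the second-order linear initial value problem: $u(t_0) = u'(t_0) = 0$ would force $u \equiv 0$, contradicting nontriviality. Absence of a finite accumulation point $t^* < \infty$ follows by combining continuity (which yields $u(t^*) = 0$) with Rolle's theorem applied between consecutive zeros (which yields a sequence of zeros of $u'$ converging to $t^*$, hence $u'(t^*) = 0$); this contradicts the just-established simplicity. Enumerability is then automatic, since the zero set is an infinite discrete subset of $[a,\infty)$ with no finite accumulation point and is therefore of order type $\mathbb{N}$.

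The main obstacle is the oscillation proof under (A3), which is the sharpest of the three criteria and requires a careful Riccati analysis together with an integration-by-parts or Cauchy--Schwarz step to produce the sharp constant $1/4$; this is where I would defer to Hille's original argument in \cite{H} rather than reprove it in full. A minor bookkeeping point worth explicit mention is that the hypotheses (A1)--(A3) are all asymptotic, while oscillation is asserted on all of $[a,\infty)$; this mismatch is harmless, because infinitely many isolated zeros lying eventually in $[T,\infty) \subset [a,\infty)$ are, in particular, zeros in $[a,\infty)$, and the nonnegativity $\phi \ge 0$ assumed throughout $[a,\infty)$ is used to rule out any additional pathology on the finite piece $[a,T]$.
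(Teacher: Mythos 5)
Your proposal is correct and follows essentially the same route as the paper: the paper likewise leaves the oscillation statements under (A1)--(A3) to the standard literature, and establishes simplicity via Picard's uniqueness theorem and non-accumulation via Bolzano--Weierstrass together with the vanishing of both $u$ and $u'$ at a hypothetical accumulation point, exactly as you do. The only difference is that you additionally sketch the standard Kneser comparison and Riccati arguments for (A1) and (A2), which the paper simply cites.
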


The proof for oscillation is simple and available in many textbooks or expository articles
(see \cite{Si, Wk}, for instance). The simplicity of each zero is a consequence of the existence and uniqueness theorem of Picard.
If the zeros of $u(t)$ are concentrated on a finite subinterval, then it follows by the Bolzano-Weierstrass theorem
that the zeros have an accumulation point at which both $u(t), u'(t)$ vanish. By Picard's theorem, $\,u(t) \equiv 0,$ which leads to a contradiction.
It is thus proved that the zeros are not concentrated on a finite interval but scattered over the interval $[a, \infty)$.

\begin{remark}
If $u(t)$ is oscillatory on $(0, b],\,b>0,$ then the zeros of $u(t)$ has an accumulation point $0$
and hence may not be enumerable in ascending order of magnitude.
In order to avoid such an occasion, we recall that Kneser and Hille also proved that if $\,\phi(t)\ge 0\,$ for all $\,t\ge a>0\,$ and
$$ \limsup_{t\,\to\,\infty}\big[t^2\phi(t)\big]<\frac 14\quad\text{or}\quad
\limsup_{t\,\to\,\infty}\left[t\int_t^\infty\phi(s) ds\right]<\frac 14,$$
then any non-trivial solution of \eqref{ST1} is non-oscillatory on $[a, \infty).$

By considering $\,v(t) = t u(1/t)\,$ which obviously satisfies
\begin{equation*}
v'' + \frac{1}{t^4}\phi\left(\frac 1t\right) v =0,\quad t>0,
\end{equation*}
it is simple to infer from the above non-oscillation criteria that
any non-trivial solution of \eqref{ST1} is non-oscillatory on the interval $\,(0, b],\,b>0,$
when $\phi(t)$ satisfies either of the following.
\begin{align*}
&{\rm(B1)} \quad  \limsup_{t\,\to\, 0+} \big[t^2\phi(t)\big] <\frac 14.\\
&{\rm(B2)} \quad \limsup_{t\,\to\, 0+} \left[\frac 1t \int_0^t s^2 \phi(s)\,ds\right]<\frac 14.
\end{align*}
\end{remark}

\subsection{Oscillation magnitudes}
As for the distribution of zeros and \emph{relative magnitudes of oscillation}, Sturm's comparison
theorems state as follows (see  \cite{LM, Ma1, S, Wa}).

\medskip

Let $\big(v(t), w(t)\big)$ be a $C^1$ solution of the system of equations
\begin{equation*}
\left\{\begin{aligned}
&{\,v'' + \phi(t) v =0,}\\
&{\,w'' +\psi(t) w =0,}\end{aligned}\right. \quad t\in I,
\end{equation*}
where $I$ is an open interval and $\phi(t), \psi(t)$ are continuous for $\,t\in I.$

\begin{itemize}
\item[\rm(I)] If $\,\phi(t)>\psi(t)\,$ for $\,t\in I,$ then $v(t)$ has at least one zero between two consecutive zeros of $w(t)$.
As a consequence, if $\,0<m<\phi(t)<M\,$ for $\,t\in I\,$ and $t_1, t_2$ are consecutive zeros of $v(t)$ with
$\,t_1<t_2,$ then
$$\frac{\pi}{\,\sqrt M\,}<t_2-t_1<\frac{\pi}{\,\sqrt m\,}.$$

\item[\rm(II)] For $\, a\in I,$ suppose that $\,v(a) = w(a) =0\,$ and $\,v'(a+) = w'(a+) >0.$
If $\,\phi(t)<\psi(t)\,$ for $\,t>a,\,t\in I,$ then $\,v(t)>w(t)\,$ for all $t$ between
$a$ and the first zero of $w(t)$ and hence the first zero of $w(t)$ for $\,t>a\,$ is on the
left of the first zero of $v(t)$ for $\,t>a.$
\end{itemize}

\begin{lemma}\label{lemmaA}
Suppose that $u(t)$ is a nontrivial solution of \eqref{ST1} satisfying $\,u(0+)=0$,
where $\phi(t)$ is continuous, strictly increasing and subject to
one of assumptions {\rm (A1), (A2), (A3).} Then the following holds true.

\begin{itemize}
\item[\rm(i)] $u(t)$ is oscillatory on $(0, \infty)$ with an infinity of simple zeros
which are enumerable in ascending order of magnitude.

\item[\rm(ii)] The wave of arches of graph $\,s=u(t)\,$ decreases in the sense
\begin{equation*}
 \left\{\begin{aligned}
 &{\quad \zeta_{k}-\zeta_{k-1}>\zeta_{k+1}-\zeta_k,}\\
&{\,\big|u(\zeta_k-t)\big|>\big|u(\zeta_k+t)\big|,\quad 0<t<\zeta_{k+1}-\zeta_k,}
\end{aligned}\right.
\end{equation*}
where $\big(\zeta_k\big)$ denotes all zeros of $u(t),\,
0\equiv \zeta_0<\zeta_1<\zeta_2<\cdots.$
\end{itemize}
\end{lemma}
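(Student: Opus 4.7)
For part (i), I would first exploit the fact that each of (A1)--(A3), combined with continuity of $\phi$, forces $\phi(t)>0$ on some right half-line $[a,\infty)$ with $a>0$; Lemma \ref{LemmaST1} then supplies infinitely many simple zeros of $u$ on $[a,\infty)$, enumerable in ascending order of magnitude. To extend the enumeration to all of $(0,\infty)$, it suffices to show that $u$ has only finitely many zeros on $(0,a)$. Since $\phi$ is strictly increasing, $\phi(t)<\phi(a)$ there, so Sturm's comparison (I) applied to $u$ and any constant-coefficient solution of $w''+\phi(a)\,w=0$ forces any two consecutive zeros of $u$ in $(0,a)$ to be separated by more than $\pi/\sqrt{\phi(a)}$; accumulation of zeros in $(0,a]$ (including at the endpoint $0$) is thereby ruled out, and simplicity follows from Picard's uniqueness theorem exactly as in the paragraph after Lemma \ref{LemmaST1}. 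The complete zero set enumerates as $0=\zeta_0<\zeta_1<\zeta_2<\cdots$.

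For part (ii) the plan is to derive both inequalities from a single reflection argument anchored at the zero $\zeta_k$. Fix $k\ge 1$ and set $V(t)=\varepsilon_1\,u(\zeta_k-t)$ and $W(t)=\varepsilon_2\,u(\zeta_k+t)$, with signs $\varepsilon_1,\varepsilon_2\in\{\pm 1\}$ chosen so that both are positive on their initial arches $(0,\zeta_k-\zeta_{k-1})$ and $(0,\zeta_{k+1}-\zeta_k)$. Then $V$ and $W$ satisfy the normal-form equations $V''+\phi(\zeta_k-t)\,V=0$ and $W''+\phi(\zeta_k+t)\,W=0$, both vanish at $t=0$, and by simplicity of the zero $\zeta_k$ share the common positive derivative $V'(0+)=W'(0+)=|u'(\zeta_k)|$. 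Strict monotonicity of $\phi$ delivers the pointwise inequality $\phi(\zeta_k-t)<\phi(\zeta_k+t)$ for $0<t<\zeta_k$, and so Sturm's comparison (II), applied with $V$ in the role of the smaller-coefficient solution, yields simultaneously $V(t)>W(t)$ on $(0,\zeta_{k+1}-\zeta_k)$ and the strict interlacing $\zeta_{k+1}-\zeta_k<\zeta_k-\zeta_{k-1}$ of first positive zeros --- precisely the two stated inequalities once the signs are unwound.

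The main obstacle I anticipate is the boundary case $k=1$: here $\zeta_0=0$ lies at the edge of the domain of $\phi$, the first positive zero of $V$ is $\zeta_1$ itself, and $\phi(\zeta_1-t)$ may become singular as $t\to\zeta_1^-$, so Sturm's comparison (II) cannot be invoked verbatim over the closed interval $[0,\zeta_1]$. I would bypass this by recasting the comparison in Wronskian form: the Picone-type identity shows that $\mathscr{W}(t)=V'(t)W(t)-V(t)W'(t)$ satisfies $\mathscr{W}'(t)=[\phi(\zeta_1+t)-\phi(\zeta_1-t)]\,V(t)W(t)>0$ on $\bigl(0,\min\{\zeta_1,\zeta_2-\zeta_1\}\bigr)$ and starts from $\mathscr{W}(0+)=0$ (since in the shifted variable the interior point $t=0$ corresponds to the regular point $\zeta_1$ of $u$, where $V,W,V',W'$ are all finite). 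A sign analysis of $\mathscr{W}$ as $t\to\zeta_1^-$, invoking $u(0+)=0$ and the fact that $V$ remains positive while approaching zero, contradicts $\zeta_2-\zeta_1\ge\zeta_1$ and closes the $k=1$ case.
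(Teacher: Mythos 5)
Your argument is correct, and for part (ii) --- the heart of the lemma --- it is the same as the paper's: reflect $u$ about the zero $\zeta_k$, fix signs so that both branches are positive with the common initial slope $|u'(\zeta_k)|$, and feed $\phi(\zeta_k-t)<\phi(\zeta_k+t)$ into Sturm's comparison (II). Where you genuinely diverge is part (i): the paper splits into the case where $\phi$ has a (necessarily unique) zero $a$, citing the standard fact that $u$ has at most one zero on an interval where $\phi<0$, and the case $\phi\ge 0$ throughout, where it verifies the non-oscillation criterion (B1) near the origin; your uniform gap bound $\pi/\sqrt{\phi(a)}$ for consecutive zeros in $(0,a)$, obtained by comparing against $w''+\phi(a)\,w=0$, treats both cases at once. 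Just note that only the first clause of statement (I) is available to you, since the quantitative two-sided spacing estimate there presupposes $0<m<\phi$, which can fail on $(0,a)$; the first clause does suffice for your purpose. Finally, your patch for $k=1$ addresses a point the paper passes over silently (its appeal to (II) is slightly informal there, as the first zero of the reflected branch sits at the endpoint $t=\zeta_1$ where $\phi(\zeta_1-t)$ may be unbounded below), and the Wronskian argument does close it: under the hypotheses $u'(0+)$ exists and is finite (on a right neighborhood of $0$ the sign of $\phi$ is constant, so $u'$ is monotone there and its limit is finite since $u>0$ with $u(0+)=0$ forbids a negative or infinite limit), whence the Wronskian tends to $0$ as $t\to\zeta_1^-$ in the equality case $\zeta_2-\zeta_1=\zeta_1$, contradicting its strict increase from the value $0$ at $t=0+$; the strict case $\zeta_2-\zeta_1>\zeta_1$ is ruled out because $V/W$ is increasing from $1$ while $V\to 0$ and $W$ stays bounded away from $0$.
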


\begin{proof}
As $\phi(t)$ is strictly increasing, either $\phi(t)$ has a unique zero $\,a>0\,$
or $\,\phi(t)\ge 0\,$ for all $\,t>0.$ In the former case, since it is known that $u(t)$ has at most one zero
for $\,0<t<a\,$ due to the sign condition $\,\phi(t)<0\,$ (see, for instance, \cite{Si}), the stated
result of part (i) follows from Lemma \ref{LemmaST1}.

In the latter case, we note that $\,\phi(0+)\,$ exists and hence
$$\limsup_{t\,\to\, 0+} \big[t^2\phi(t)\big] =0<\frac 14,$$
that is, condition (B1) is verified. As a consequence, for any fixed $\,b>0,$
$u(t)$ is non-oscillatory on the interval $\,(0, b]\,$ and oscillatory on the interval $\,[b, \infty)\,$ so that part (i) follows once again by Lemma \ref{LemmaST1}.

By considering $-u(t)$ otherwise, we may assume $\,u(t)>0$ for $0<t<\zeta_1$ and hence $u(t)$ is positive on $\,\big(\zeta_{2k},\,\zeta_{2k+1}\big)\,$
and negative on $\,\big(\zeta_{2k+1},\,\zeta_{2k+2}\big)\,$ for each $\,k=0,1,\cdots.$ We fix $\,k\ge 1\,$ and consider auxiliary functions
\begin{align*}
v(t) &= (-1)^{k+1} u\left(\zeta_{k} -t\right),\quad 0\le t< \zeta_{k}-\zeta_{k-1},\\
w(t) &= (-1)^k u\left(\zeta_{k} +t\right),\quad 0\le t< \zeta_{k+1}-\zeta_{k},
\end{align*}
both of which are positive. Evidently, $v(t)$ and $w(t)$ satisfy
\begin{align*}
&\quad  v'' + \phi\left(\zeta_{k} -t\right)v =0, \quad w'' + \phi\left(\zeta_{k} +t\right)w =0,\\
& v(0) = w(0) =0,\quad v'(0) = w'(0)  = (-1)^ku'(\zeta_{k})>0,
\end{align*}
whence part (ii) follows at once from Sturm's theorem (II).
\end{proof}

\begin{remark}\label{extraRK1}
When $\phi(t)$ is increasing, not necessarily in the strict sense,
an obvious modification indicates that part (ii) must be replaced by
\begin{equation}\label{R1}
 \left\{\begin{aligned}
 &{\quad \zeta_{k}-\zeta_{k-1}\ge \zeta_{k+1}-\zeta_k,}\\
&{\,\big|u(\zeta_k-t)\big|\ge \big|u(\zeta_k+t)\big|,\,\, 0<t<\zeta_{k+1}-\zeta_k.}
\end{aligned}\right.
\end{equation}

The inequalities of part (ii) as well as \eqref{R1} are reversed in the case when $\phi(t)$
is decreasing. Although these results, often referred to as Sturm's convexity theorem,  are known
and available in the aforementioned references, we gave a proof for part (ii) for the sake of completeness.
\end{remark}

\section{Oscillatory integrals}
Our main theorem reads as follows.

\begin{theorem}\label{theoremM1}
Suppose that $u(t)$ is an oscillatory solution of \eqref{ST1},
where $\phi(t)$ is continuous, strictly increasing
and subject to one of assumptions {\rm (A1), (A2), (A3).} Let $\big(\zeta_k\big)$ denote
all zeros of $u(t)$ arranged in ascending order and assume that
$\,u(0+)=0$ and $\,u(t)>0\,$ for $\,0<t<\zeta_1.$ Put
\begin{equation*}
(Uf)(x)= \int_0^\infty f(t) u(xt)\,dt\qquad (x>0).
\end{equation*}
 If $f(t)$ is a nonnegative, sectionally continuous and decreasing function such that it does not vanish
identically and satisfies
\begin{align*}
&{\rm (P1)}\quad \int_0^1 f(t) u(t)\,dt<\infty,\\
&{\rm (P2)}\quad \lim_{k\to \infty}\int_{\zeta_k}^{\zeta_{k+1}} f(t/x) |u(t)|\, dt =0\quad\text{for each $\,x>0,$}
\end{align*}
then $(Uf)(x)$ exists for each $\,x>0\,$ with $\,(Uf)(x)>0.$
\end{theorem}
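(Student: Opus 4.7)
The plan is to rescale so that the zeros of the kernel appear explicitly, then extract positivity by pairing consecutive arches and invoking the Sturm-convexity result of Lemma 2.2. After the substitution $s=xt$, one has
\begin{equation*}
x(Uf)(x) \;=\; \int_0^\infty g(s)\,u(s)\,ds, \qquad g(s) := f(s/x),
\end{equation*}
where $g$ inherits the hypotheses on $f$. Setting $T_k:=\int_{\zeta_k}^{\zeta_{k+1}} g(s)\,u(s)\,ds$, we have $T_{2j}\geq 0$ and $T_{2j+1}\leq 0$ from the sign pattern of $u$, and the full integral, read as an improper integral, equals $\sum_{k\geq 0} T_k$.

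The central step is to pair $T_k$ with $T_{k+1}$ by reflecting about their common zero $\zeta_{k+1}$. With $\Delta:=\zeta_{k+1}-\zeta_k$ and $\delta:=\zeta_{k+2}-\zeta_{k+1}$, Lemma 2.2 supplies the strict inequalities $\Delta>\delta$ and $|u(\zeta_{k+1}-t)|>|u(\zeta_{k+1}+t)|$ for $0<t<\delta$. After the substitutions $s=\zeta_{k+1}\mp t$ and splitting the left piece at $t=\delta$, combining these with $g(\zeta_{k+1}-t)\geq g(\zeta_{k+1}+t)$ from the monotonicity of $g$, one reads off that $T_k+T_{k+1}$ has the same sign as the earlier arch $T_k$ and that $|T_{k+1}|\leq |T_k|$. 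Iterating yields the monotone chain
\begin{equation*}
T_0 \;\geq\; |T_1| \;\geq\; T_2 \;\geq\; |T_3| \;\geq\; \cdots,
\end{equation*}
hypothesis (P2) then forces $|T_k|\to 0$, and the Leibniz alternating-series criterion delivers convergence of $\sum T_k$ to a nonnegative limit (nonnegativity being manifest from grouping into pairs $I_j:=T_{2j}+T_{2j+1}\geq 0$). Finiteness of the first term $T_0$ itself, in cases where $u$ decays only slowly at the origin, is secured by hypothesis (P1) after the rescaling.

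For strict positivity I would argue separately that $I_0>0$. Since $g$ is nonnegative, decreasing and not identically zero, it is strictly positive on an initial segment of the positive axis. If the support of $g$ is contained in $[0,\zeta_1]$, then $T_k=0$ for all $k\geq 1$, while $T_0=\int_0^{\zeta_1} g(s)\,u(s)\,ds>0$ outright because $u>0$ on $(0,\zeta_1)$. Otherwise $g(\zeta_1+t)>0$ on a set of positive measure in $(0,\delta)$, in which case the strict inequality $|u(\zeta_1-t)|>|u(\zeta_1+t)|$ of Lemma 2.2 together with $g(\zeta_1-t)\geq g(\zeta_1+t)>0$ forces the paired integrand to be strictly positive on that set, yielding $I_0>0$.

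The main obstacle is the bookkeeping inside the pairing argument: carefully tracking the ``leftover'' contribution on $(\delta,\Delta)$ coming from the wider arch having no counterpart on the narrower side, and checking that the strict inequalities of Lemma 2.2 survive combination with the merely weak monotonicity of $g$. A secondary subtlety lies in the interpretation of the improper integral: without the decay of $|T_k|$ guaranteed by (P2), the value of $\int_0^\infty g(s)\,u(s)\,ds$ could not be safely identified with the sum $\sum_k T_k$ of paired contributions, so (P2) is genuinely essential rather than cosmetic.
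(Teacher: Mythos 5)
Your proposal is correct and follows essentially the same route as the paper: the rescaling $s=xt$, the pairing of consecutive arches by reflection about their common zero, the strict inequalities of Lemma~\ref{lemmaA}(ii) combined with the monotonicity of $g$, hypothesis (P1) for the behaviour near the origin, and hypothesis (P2) with the alternating-series test to justify convergence and the identification of the improper integral with $\sum_k T_k$. Your terms $T_k$ and pairs $I_j$ correspond exactly to the paper's $(-1)^kA_k$ and $B_j$, and your case analysis for strict positivity of $I_0$ is just a more explicit version of the paper's remark on the support of $g$.
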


\begin{proof} The sign assumption on $u(t)$ for $\,0<t<\zeta_1\,$ implies
\begin{equation}\label{I1}
\left\{\begin{aligned}
&{u(t)>0,\quad\zeta_{2k}<t<\zeta_{2k+1},}\\
&{u(t)<0,\quad\zeta_{2k+1}<t<\zeta_{2k+2},}
\end{aligned}\right.\,\,\,\, k=0,1,2,\cdots,
\end{equation}
where $\,\zeta_0 \equiv 0.$ By Lemma \ref{lemmaA}, $\,\zeta_{2k+1}-\zeta_{2k}>\zeta_{2k+2}-\zeta_{2k+1}\,$ and
\begin{align}\label{I2}
u(\zeta_{2k+1}-t)>\big|u(\zeta_{2k+1}+t)\big|,\quad 0<t<\zeta_{2k+2}-\zeta_{2k+1}.
\end{align}

Let us fix $\,x>0\,$ and put $\,g(t) = f(t/x),\,t>0.$ We decompose
\begin{align*}
x(Uf)(x) &= \sum_{k=0}^\infty \int_{\zeta_k}^{\zeta_{k+1}} g(t) u(t) dt
=\sum_{k=0}^\infty B_{k},\\
B_{k} &:= \left(\int_{\zeta_{2k}}^{\zeta_{2k+1}} +
\int_{\zeta_{2k+1}}^{\zeta_{2k+2}}\right) g(t) u(t) dt,
\end{align*}
which is legitimate if $(Uf)(x)$ exists. By changing variables, we write
\begin{align}\label{I4}
B_{k} &=\int_0^{\zeta_{2k+1}-\zeta_{2k}} g(\zeta_{2k+1} -t) u(\zeta_{2k+1} -t)\,dt\nonumber\\
 & + \int_0^{\zeta_{2k+2}-\zeta_{2k+1}} g(\zeta_{2k+1} +t) u(\zeta_{2k+1} +t)\,dt\nonumber \\
 &=\int_0^{\zeta_{2k+2}-\zeta_{2k+1}}\Big[g(\zeta_{2k+1} -t) u(\zeta_{2k+1} -t)
 + g(\zeta_{2k+1} +t) u(\zeta_{2k+1} +t)\Big]dt\nonumber\\
 &+ \int_{\zeta_{2k+2}-\zeta_{2k+1}}^{\zeta_{2k+1}-\zeta_{2k}} g(\zeta_{2k+1} -t) u(\zeta_{2k+1} -t)\,dt.
\end{align}

Since $g(t)$ is nonnegative, decreasing and sectionally continuous,
\eqref{I1}, \eqref{I2} clearly indicate that both integrands of \eqref{I4} are
positive when the interval $\big[\zeta_{2k},\,\zeta_{2k+2}\big]$ contains the support of $g(t)$
and nonnegative in the general case. Thus it is evident that $\,B_k>0\,$ in the former case and
$\,B_k\ge 0\,$ for all $k$. By modifying the support condition in an obvious way,
we may conclude that $\,(Uf)(x)>0\,$ unless $g(t)$ vanishes identically.

It remains to verify that $(Uf)(x)$ exists, that is, the integral defining $(Uf)(x)$ converges.
For this purpose, we decompose
\begin{equation}\label{I5}
x(Uf)(x) = \sum_{k=0}^\infty (-1)^k A_k,\quad A_k := \int_{\zeta_k}^{\zeta_{k+1}} g(t) |u(t)| dt.
\end{equation}
Since $f(t)$ is monotone and $u(t)$ is continuous for $\,t>0,$ the condition (P1) implies that
the integral of $f(t)|u(xt)|$, possibly improper, over any right-neighborhood of $0$ exists. It is thus evident that
$$A_0 = x\int_0^{\zeta_1/x} f(t)|u(xt)| dt <\infty.$$

We may assume that $\,A_k>0\,$ for all $k$ for the convergence is obvious otherwise.
Proceeding as above, we observe that
\begin{align*}
A_k - A_{k+1} = & (-1)^k \int_0^{\zeta_{k+2}-\zeta_{k+1}}\Big[g(\zeta_{k+1} -t) u(\zeta_{k+1} -t)\\
 &\qquad\qquad\qquad\quad  + \,g(\zeta_{k+1} +t) u(\zeta_{k+1} +t)\Big]dt\\
 &+ (-1)^k \int^{\zeta_{k+1}-\zeta_{k}}_{\zeta_{k+2}-\zeta_{k+1}} g(\zeta_{k+1} -t) u(\zeta_{k+1} -t)\,dt>0,
\end{align*}
where $\, k=0, 1, 2, \cdots,$ and so $(A_k)$ is strictly decreasing. Since $\,A_k\to 0\,$ as $\,k\to\infty\,$ by condition (P2),
it follows by the alternating series test that the series on the right of \eqref{I5} converges and thus $(Uf)(x)$ exists.
\end{proof}

\begin{remark}
If we assume instead that
$\,u(t)<0\,$ for all $\,0<t<\zeta_1,$ without altering other assumptions,
then the same proof verifies
$$\,(Uf)(x)<0\quad\text{for all}\quad x>0.$$
\end{remark}

Suppose that $\phi(t)$ is merely increasing so that \eqref{R1} holds true. In this case, a close inspection on
\eqref{I4} shows that both integrands are nonnegative so that $\,B_k\ge 0\,$ for all $k$ under the same assumption on $g(t)$.
If the interval $\big[\zeta_{2k},\,\zeta_{2k+2}\big]$ contains the support of $g(t)$, where $g(t)$ is assumed to be strictly decreasing,
then the first integrand is positive and hence $\,B_k>0.$

\begin{theorem}\label{theoremM3}
Let $u(t)$ be an oscillatory solution of \eqref{ST1},
where $\phi(t)$ is continuous, increasing, not necessarily in the strict sense, and subject to
one of assumptions {\rm (A1), (A2), (A3),} such that
$\,u(0+)=0$ and $\,u(t)>0\,$ for all $\,0<t<\zeta_1.$
If $f(t)$ is a nonnegative and sectionally continuous function such that
it is strictly decreasing on its support and subject to conditions {\rm (P1), (P2)}, then $(Uf)(x)$ exists for each $\,x>0\,$ with $\,(Uf)(x)>0.$
\end{theorem}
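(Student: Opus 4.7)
The plan is to imitate the proof of Theorem \ref{theoremM1}, replacing the strict Sturm inequalities of Lemma \ref{lemmaA} by their weak counterparts \eqref{R1} from Remark \ref{extraRK1}, and then to recover the strict inequality $(Uf)(x)>0$ from the strict decrease of $g(t):=f(t/x)$ on its support.

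First I would repeat verbatim the decomposition $x(Uf)(x)=\sum_{k=0}^\infty B_k$ of \eqref{I4}. Under the weak inequalities $\zeta_{2k+1}-\zeta_{2k}\ge \zeta_{2k+2}-\zeta_{2k+1}$ and $u(\zeta_{2k+1}-t)\ge |u(\zeta_{2k+1}+t)|$ supplied by Remark \ref{extraRK1}, pairing with the decreasing $g$ shows that both integrands in \eqref{I4} are still pointwise nonnegative, so $B_k\ge 0$ for every $k$. For convergence of the series I would reuse the alternating-series argument of Theorem \ref{theoremM1}: the sequence $(A_k)$ from \eqref{I5} is now only weakly decreasing, but this suffices together with $A_k\to 0$ from (P2) to apply Leibniz's test, while (P1) handles the initial term.

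The crucial step is to isolate one index $k_0$ with $B_{k_0}>0$. Since $g\not\equiv 0$ is sectionally continuous and nonnegative, I can fix a continuity point $s_0>0$ of $g$ with $g(s_0)>0$ that avoids the discrete set $\{\zeta_k\}$; then $s_0$ lies in some open arch $(\zeta_n,\zeta_{n+1})$ of $u$. If $s_0\in(\zeta_{2k_0},\zeta_{2k_0+1})$ (positive arch), put $t_0=\zeta_{2k_0+1}-s_0$. When $t_0<\zeta_{2k_0+2}-\zeta_{2k_0+1}$, the companion point $2\zeta_{2k_0+1}-s_0>s_0$ satisfies $g(2\zeta_{2k_0+1}-s_0)<g(s_0)$ strictly, either by strict decrease on the support (if the companion still lies there) or because it falls outside the support; combined with $|u(\zeta_{2k_0+1}+t_0)|\le u(\zeta_{2k_0+1}-t_0)$ this makes the first integrand of \eqref{I4} strictly positive at $t_0$. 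When $t_0\ge\zeta_{2k_0+2}-\zeta_{2k_0+1}$, the second integrand of \eqref{I4} evaluated at $t_0$ reduces to $g(s_0)u(s_0)>0$. If instead $s_0\in(\zeta_{2k_0+1},\zeta_{2k_0+2})$ (negative arch), set $t_0=s_0-\zeta_{2k_0+1}$; the weak wave inequality forces $2\zeta_{2k_0+1}-s_0\in[\zeta_{2k_0},\zeta_{2k_0+1})$, and since $g$ is decreasing one has $g(2\zeta_{2k_0+1}-s_0)\ge g(s_0)>0$, placing both points in the support with $2\zeta_{2k_0+1}-s_0<s_0$, so strict decrease yields $g(2\zeta_{2k_0+1}-s_0)>g(s_0)$, again making the first integrand of \eqref{I4} strictly positive at $t_0$. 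In either case sectional continuity propagates the strict positivity to an open neighborhood, producing $B_{k_0}>0$.

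The main obstacle I anticipate lies in the third step: with $\phi$ only weakly increasing, the wave inequality in \eqref{R1} may degenerate to an equality, so the entire strictness budget must come from $g$ through its strict decrease on the support. This forces the case split according to the arch on which $s_0$ lies, and requires careful treatment of whether the symmetric companion $2\zeta_{2k_0+1}-s_0$ still belongs to the support of $g$ or escapes from it.
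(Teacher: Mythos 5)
Your proposal is correct and takes essentially the same route as the paper, whose entire justification for this theorem is the short paragraph preceding its statement: replace the strict inequalities of Lemma \ref{lemmaA} by the weak ones \eqref{R1} to get $B_k\ge 0$ and a (weakly) decreasing $(A_k)$ for the alternating-series test, then recover strictness of some $B_{k_0}$ from the strict decrease of $g$ on its support. Your case analysis locating the witness index $k_0$ is in fact more careful than the paper's sketch (which only addresses the case where the support of $g$ sits inside a single $[\zeta_{2k},\zeta_{2k+2}]$); the one phrase to tighten is that ``sectional continuity propagates the strict positivity,'' since $g$ need not be continuous at the companion point $2\zeta_{2k_0+1}-s_0$ --- there you should instead invoke monotonicity and the one-sided limits of $g$, which together with strict decrease on the support still give a uniform positive lower bound for the integrand on a one-sided neighborhood of $t_0$.
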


\section{Fourier transforms}
This section aims to refine P\'olya's results concerning positivity of
Fourier transforms by revisiting in the framework of oscillatory integrals.

As for the Fourier sine transform on the positive real line
\begin{equation*}
(Uf)(x) =\int_0^\infty f(t) \sin\, (xt)\,dt \qquad(x>0),
\end{equation*}
the kernel $\,u(t) = \sin t\,$ is an oscillatory solution of
the differential equation \eqref{ST1} with $\,\phi(t) \equiv 1\,$ such that
$\,u(0+) =0\,$ and $\,u(t)>0\,$ for $\,0<t<\pi.$ Thus the Fourier sine transform
falls under the scope of Theorem \ref{theoremM3}.

\begin{itemize}
\item Due to the asymtotic behavior $\,\sin t \,\sim\,t\,$
as $\,t\to 0,$ it is evident that condition (P1) holds when $\,f(t) t\,$ is integrable over the interval $(0, 1).$

\item We claim that condition (P2) holds if $f(t)$ is a nonnegative decreasing function such that $\,f(t)\to 0\,$ as $\,t\to \infty.$ Indeed,
given $\,x>0\,$ and $\,\epsilon>0,\,$ if we choose $N$ so that $\,f(t/x)<\epsilon/2\,$ for all $\,t\ge N\pi,$ then
$$\int_{k\pi}^{(k+1)\pi} f(t/x) |\sin t|\,dt\le \frac{\epsilon}{2}\int_{k\pi}^{(k+1)\pi}  |\sin t|\,dt =\epsilon,\quad k\ge N.$$
\end{itemize}

To sum up, Theorem \ref{theoremM3} yields the following result.

\begin{theorem}\label{theoremT}
The inequality
\begin{equation*}
\int_0^\infty f(t) \sin (xt)\,dt >0
\end{equation*}
holds true for all $\,x>0\,$ when $f(t)$ is nonnegative, sectionally continuous, strictly decreasing on its support,
\begin{equation*}
\int_0^1 f(t) t\,dt<\infty \quad \text{and}\quad \lim_{t\to\infty} f(t) =0.
\end{equation*}
\end{theorem}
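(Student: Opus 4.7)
The plan is to verify that the Fourier sine transform fits the hypotheses of Theorem \ref{theoremM3} and then simply invoke it. I choose $\phi(t)\equiv 1$, so the normal-form equation \eqref{ST1} has $u(t)=\sin t$ as an oscillatory solution with $u(0+)=0$, with zeros $\zeta_k=k\pi$ enumerable in ascending order, and with $u(t)>0$ on $(0,\zeta_1)=(0,\pi)$. The function $\phi\equiv 1$ is continuous and increasing (in the non-strict sense, which is what Theorem \ref{theoremM3} requires), and assumption (A2) is immediate since $\int_a^\infty \phi(t)\,dt=+\infty$ for any $a>0$. Thus the structural hypotheses on $(u,\phi)$ needed for Theorem \ref{theoremM3} are in place, and it only remains to check that the given $f$ satisfies the integrability conditions (P1) and (P2).

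For (P1) I will use the elementary bound $|\sin t|\le t$ for $t>0$, which gives
\begin{equation*}
\int_0^1 f(t)\,|\sin t|\,dt \;\le\; \int_0^1 f(t)\,t\,dt \;<\;\infty
\end{equation*}
by the hypothesis on $f$ near the origin. For (P2), I will imitate the short argument already indicated in the paper just before the theorem: fix $x>0$ and $\epsilon>0$, and use $f(t)\to 0$ as $t\to\infty$ to pick $N$ so large that $f(t/x)<\epsilon/\bigl(2\int_0^\pi\sin s\,ds\bigr)=\epsilon/4$ whenever $t\ge N\pi$. Since $f$ is decreasing (strictly on its support, hence in particular monotone), it is bounded on $[k\pi,(k+1)\pi]$ by its value at the left endpoint, so for $k\ge N$,
\begin{equation*}
\int_{k\pi}^{(k+1)\pi} f(t/x)\,|\sin t|\,dt \;\le\; f(k\pi/x)\int_{k\pi}^{(k+1)\pi}|\sin t|\,dt \;=\; 2f(k\pi/x) \;<\; \epsilon,
\end{equation*}
which establishes (P2).

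Once (P1) and (P2) are verified, Theorem \ref{theoremM3} applies directly: since $f$ is nonnegative, sectionally continuous, and strictly decreasing on its support, the conclusion $(Uf)(x)>0$ for every $x>0$ follows. I do not anticipate a genuine obstacle; the only item requiring any care is the monotonicity hypothesis in Theorem \ref{theoremM3}, which demands strict decrease on the support (rather than everywhere), so I should explicitly note that this is exactly what is assumed in the hypothesis, which is precisely the relaxation that matches P\'olya's classical setting where $f$ may be compactly supported.
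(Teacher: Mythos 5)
Your proposal is correct and follows essentially the same route as the paper: the paper likewise reduces Theorem \ref{theoremT} to Theorem \ref{theoremM3} with $\phi\equiv 1$ and $u(t)=\sin t$, verifies (P1) from $\sin t\sim t$ near $0$, and verifies (P2) by the same $\epsilon$--$N$ argument using $f(t)\to 0$ and the boundedness of $\int_{k\pi}^{(k+1)\pi}|\sin t|\,dt$. Your extra remarks on (A2) and on strict decrease being required only on the support are accurate and consistent with the paper's hypotheses.
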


We illustrate Theorem \ref{theoremT} with
\begin{equation*}
\int_0^\infty \frac{\sin(xt)}
{\,t^{\gamma} (t^2 + a^2)^{\delta}}\,dt >0,\quad x>0,\,a>0,
\end{equation*}
valid for $\,0\le\gamma <2,\, \delta>0\,$ or $\,0<\gamma<2,\,\delta =0,$ which includes the following
special cases of interest (\cite[(20), (28), 2.2]{E})
\begin{align*}
\int_0^\infty \frac{\sin(xt)}
{t(t^2 + a^2)}\,dt &= \frac{\pi}{2 a^2} \left(1- e^{-ax}\right),\\
\int_0^\infty \frac{\sin(2xt)}
{\sqrt{t(t^2 + a^2)}}\,dt &= \sqrt{\pi x}\, I_{1/4}(ax) K_{1/4}(ax),
\end{align*}
where $\,I_\nu(x), K_\nu(x)\,$ stand for the modified Bessel functions of the first and second kind, respectively
(see, for instance, \cite[3.7]{Wa}).

An integration by parts gives
$$\int_0^\infty f(t)\cos (xt)\,dt = \frac 1x \int_0^\infty \left[-f'(t)\right] \sin (xt)\,dt,$$
provided that $\,t f(t)\to 0\,$ as $\,t\to 0+\,$ and $\,f(t)\to 0\,$ as $\,t\to \infty.$
By applying Theorem \ref{theoremM3} to $\,-f'(t),$  we obtain the following result. As customary,
a function $h$ will be said to be sectionally smooth on an interval $I$ if both $\,h, h' \,$ are sectionally or piecewise continuous on $I$.

\begin{corollary}\label{corollaryT}
The inequality
\begin{equation*}
\int_0^\infty f(t)\cos (xt)\,dt>0
\end{equation*}
holds true for all $\,x>0\,$ when $f(t)$ is nonnegative, sectionally smooth,  strictly decreasing and convex on its support,
$$\int_0^1 f(t)\,dt<\infty\quad\text{and}\quad \lim_{t\to\infty} f(t) =0.$$
\end{corollary}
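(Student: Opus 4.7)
My plan follows the hint already given by the paper: integrate by parts to rewrite
\[
\int_0^\infty f(t)\cos(xt)\,dt = \frac{1}{x}\int_0^\infty \bigl[-f'(t)\bigr]\sin(xt)\,dt,
\]
and then apply Theorem \ref{theoremT} to the new integrand $g(t) := -f'(t)$. Accordingly, the proof splits into two tasks: (a) legitimizing the integration by parts, and (b) verifying that $g$ satisfies every hypothesis of Theorem \ref{theoremT}.

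For (a), I would observe that the nonnegative, decreasing function $f$, being integrable near the origin, obeys the elementary bound $tf(t) \leq \int_0^t f(s)\,ds$, which forces $tf(t)\to 0$ as $t\to 0+$; together with $f(t)\to 0$ at infinity, this makes the boundary terms $\bigl[f(t)\sin(xt)/x\bigr]_0^\infty$ vanish, and convexity guarantees the continuity needed inside the support so that the integration by parts goes through globally. For (b), I would verify in turn: $g \geq 0$ because $f$ is decreasing; $g$ is sectionally continuous because $f$ is sectionally smooth; $g$ is strictly decreasing on its support because convexity of $f$ on its support renders $f'$ monotone nondecreasing, strictly so on the region where $f'<0$; $\int_0^1 g(t)\,t\,dt < \infty$ by a second integration by parts that reduces it to $-f(1) + \int_0^1 f(t)\,dt$ modulo a boundary term at $0$ controlled by (a); and finally $g(t)\to 0$ as $t\to\infty$, since $f'$ is bounded above by $0$ and nondecreasing on the tail, so its limit exists and any strictly negative limit would force $f\to -\infty$. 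Theorem \ref{theoremT} applied to $g$ then yields strict positivity of the corresponding sine integral, and hence of the cosine integral after dividing by $x$.

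The main subtlety I anticipate lies in the \emph{strict}-decrease requirement on $g$ that Theorem \ref{theoremT} imposes. Convexity of $f$ alone makes $f'$ only nondecreasing, so any linear piece of $f$ on its support translates into a flat piece of $g$, where the hypothesis is not literally met. Either one reads ``convex on its support'' in the strict sense here, or one revisits the proof of Theorem \ref{theoremM3} and notes that strict positivity requires strict decrease of $g$ on only a single sub-arch of $\sin(xt)$, the remaining pieces contributing nonnegatively; either reading closes the gap, and I would favor the strict-convexity reading, since it keeps the corollary in clean parallel with Theorem \ref{theoremT}.
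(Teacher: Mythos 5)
Your proposal is correct and follows essentially the same route as the paper: integrate by parts (justified by $t f(t)\to 0$ as $t\to 0+$, which you derive from the integrability of $f$ near the origin, and $f(t)\to 0$ at infinity) and then apply the sine-transform positivity result to $g=-f'$. The strict-convexity caveat you flag at the end is genuine rather than pedantic --- the tent function $f(t)=\max(1-t,0)$ satisfies every stated hypothesis under the non-strict reading of ``convex'' yet yields $\int_0^\infty f(t)\cos(xt)\,dt=(1-\cos x)/x^2$, which vanishes at $x=2\pi k$ --- so your preferred resolution, reading ``convex on its support'' strictly so that $-f'$ is strictly decreasing there as Theorem \ref{theoremT} demands, is exactly the reading the paper's one-line proof implicitly relies on.
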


We illustrate Corollary \ref{corollaryT} with
\begin{equation*}
\int_0^\infty \frac{\cos (xt)}{(t+a)^\lambda}\,dt>0,\quad x>0,\,a>0,
\end{equation*}
valid for any $\,\lambda>0.$ As this example may indicate, it should be noted that
Theorem \ref{theoremT} and Corollary \ref{corollaryT} extend P\'olya's results
to the case of non-integrable functions. Additionally relevant examples are
\begin{align*}
\int_0^\infty t^{-\nu}\sin (xt)\,dt &=  \frac{\pi}{2\Gamma(\nu)}
\csc \left(\frac{\nu\pi}{2}\right) x^{\nu-1}>0,\\
\int_0^\infty t^{-\nu}\cos (xt)\,dt &= \frac{\pi}{2\Gamma(\nu)}
\sec \left(\frac{\nu\pi}{2}\right) x^{\nu-1}>0,
\end{align*}
valid for all $\,x>0,$ where the first inequality holds true for $\,0<\nu<2\,$ and the second inequality
for $\,0<\nu<1\,$ (see \cite[(1), 1.3, (1), 2.3]{E} ). As obvious, the function $\,f(t) = t^{-\nu}\,$
is not integrable for any $\,\nu\in\R.$

\section{Hankel transforms}
In the framework of Sturm's oscillation theory, the kernel $\,u(t) =\sqrt t J_\nu(t)\,$ of Hankel transform
\eqref{H1} arises as a solution of \eqref{ST1} with
\begin{equation}\label{SB1}
\phi(t) = 1 + \frac{1-4\nu^2}{4t^2},\quad t>0.
\end{equation}

As a smooth function, $\phi(t)$ satisfies all oscillation assumptions (A1), (A2), (A3). Moreover,
it decreases strictly to the value $1$ when $\,|\nu|<1/2\,$ and increases strictly to the value $1$ when
$\,|\nu|>1/2.$ As customary, $\big(j_{\nu, k}\big)$ will denote all positive zeros of $J_\nu(z)$
arranged in ascending order.

Our principal theorem states as follows.

\begin{theorem}\label{theoremH1}
For $\,\nu>1/2,$ we have $\,(H_\nu f)(x)>0\,$ for all $\,x>0\,$
if $f(t)$ is a nonnegative, sectionally continuous and decreasing function for $\,t>0\,$ such that
it does not vanish identically,
$$\int_0^1 t^{\nu+1/2}f(t)dt<\infty \quad \textit{and}\quad \lim_{t\to \infty}f(t)=0.$$
\end{theorem}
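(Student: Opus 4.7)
The strategy is to reduce the theorem directly to Theorem \ref{theoremM1} with the Hankel kernel $u(t) = \sqrt{t}\, J_\nu(t)$. With this choice, $(H_\nu f)(x) = \int_0^\infty f(t) u(xt)\, dt = (Uf)(x)$, so the proof amounts to checking each hypothesis of Theorem \ref{theoremM1} under the assumption $\nu > 1/2$.

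First I would record the oscillation setup already noted in \eqref{SB1}: $u$ solves \eqref{ST1} with $\phi(t) = 1 + (1 - 4\nu^2)/(4t^2)$, continuous on $(0, \infty)$. Because $\nu > 1/2$, the coefficient $1 - 4\nu^2$ is strictly negative, so $\phi$ is strictly increasing with $\phi(t) \nearrow 1$ as $t \to \infty$. In particular $\liminf_{t \to \infty} t^2\phi(t) = +\infty$, so assumption (A1) holds. The small-argument expansion $J_\nu(t) \sim (t/2)^\nu/\Gamma(\nu+1)$ gives $u(0+) = 0$, and $u(t) > 0$ on $(0, j_{\nu,1})$ since $J_\nu$ is positive on that interval; by Lemma \ref{lemmaA}, the zeros of $u$ enumerated in ascending order are exactly $\zeta_k = j_{\nu, k}$.

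Next I would verify conditions (P1) and (P2). For (P1), the asymptotic $u(t) \sim C_\nu\, t^{\nu + 1/2}$ as $t \to 0+$, combined with the fact that a nonnegative decreasing $f$ is bounded on $[\delta, 1]$ for each $\delta > 0$, reduces the convergence of $\int_0^1 f(t) u(t)\, dt$ to the assumed condition $\int_0^1 t^{\nu + 1/2} f(t)\, dt < \infty$. For (P2), I would invoke the classical uniform bound $\sqrt{t}\,|J_\nu(t)| \le M_\nu$ on $[1, \infty)$, which follows from the large-$t$ asymptotic $J_\nu(t) \sim \sqrt{2/(\pi t)}\,\cos(t - \nu\pi/2 - \pi/4)$, together with the fact that $j_{\nu, k+1} - j_{\nu, k} \to \pi$ as $k \to \infty$. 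Since $f$ is decreasing and nonnegative, for all sufficiently large $k$,
\[
\int_{j_{\nu, k}}^{j_{\nu, k+1}} f(t/x)\,|u(t)|\, dt \;\le\; M_\nu \bigl(j_{\nu, k+1} - j_{\nu, k}\bigr)\, f(j_{\nu, k}/x),
\]
which tends to $0$ because $j_{\nu, k}/x \to \infty$ and $f(s) \to 0$ as $s \to \infty$.

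With every hypothesis of Theorem \ref{theoremM1} now verified, its conclusion yields $(Uf)(x) > 0$ for all $x > 0$, which is exactly $(H_\nu f)(x) > 0$. The only nonmechanical ingredient is the uniform boundedness of $\sqrt{t}\, J_\nu(t)$ on $[1, \infty)$ needed for (P2); this is where the hypothesis $\nu > 1/2$ meets the classical large-argument behavior of Bessel functions, but the required bound is standard. I expect the rest of the verification to follow by direct appeal to the asymptotic formulas for $J_\nu$ together with the structural results already established in Sections 2 and 3.
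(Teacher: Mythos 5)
Your proposal is correct and follows essentially the same route as the paper: reduce to Theorem \ref{theoremM1}, check (P1) from the small-argument behavior $u(t)\sim C_\nu t^{\nu+1/2}$, and check (P2) by combining the large-argument boundedness of $\sqrt{t}\,J_\nu(t)$ with the bounded gaps $j_{\nu,k+1}-j_{\nu,k}\to\pi$ and $f(t)\to 0$ (your uniform bound $M_\nu$ is a slightly cleaner packaging of the paper's explicit use of Hankel's expansion together with McMahon's formula). One small correction to your closing remark: the hypothesis $\nu>1/2$ is not what makes $\sqrt{t}\,J_\nu(t)$ bounded on $[1,\infty)$ (that holds for all $\nu>-1$); as you yourself note earlier, its real role is to make $\phi(t)=1+(1-4\nu^2)/(4t^2)$ strictly increasing so that Theorem \ref{theoremM1} applies.
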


\begin{proof}
On considering the asymptotic behavior
\begin{align*}
u(t)=\sqrt t J_\nu(t) &= \sqrt t\, \sum_{m=0}^\infty \frac{(-1)^m}{\,m!\,\Gamma(\nu+1+m)\,}\left(\frac t2\right)^{2m+\nu}\\
&\,\sim\,\frac{\sqrt 2}{\Gamma(\nu+1)} \left(\frac t2\right)^{\nu+1/2}\quad\text{as $\,t\to 0,$}
\end{align*}
it is clear that $\,u(0+) =0, \,u(t)>0\,$ for $\,0<t<j_{\nu, 1}\,$ and condition (P1) holds true.
By Theorem \ref{theoremM1}, hence, it suffices to show that condition (P2) holds true under the
stated assumptions on $f(t)$.

Since $\phi(t)$ increases strictly to the value $1$, if we apply Sturm's theorem (I) with any $m$ satisfying
$\,0<m<\phi(t)<1\,$ and then letting $\, m\to 1,$ it is straightforward to deduce that
\begin{equation}\label{PH1}
j_{\nu, 2} - j_{\nu, 1}>j_{\nu, 3}-j_{\nu, 2} >\cdots\,\to\,\pi.
\end{equation}

Hankel's asymptotic expansion (\cite[7.21]{Wa}) indicates
\begin{equation}\label{PH2}
u(t) = \sqrt{\frac{2}{\pi}}\,\cos \left(t- \frac{\nu\pi}{2}-\frac{\pi}{4}\right) +  O\big(t^{-1}\big)
\end{equation}
as $\,t\to \infty.$ By combining \eqref{PH1} with McMahon's asymptotic expansion for large zeros
of Bessel functions (\cite {Mc}, \cite[15.53]{Wa})
\begin{equation}\label{ZA}
j_{\nu, k} = \left(k+ \frac \nu 2 - \frac 14\right)\pi + O\big(k^{-1}\big)
\end{equation}
as $\,k\to \infty,$ we may choose an integer $N$ so that
$$j_{\nu, k}\ge \frac{k\pi}{2},\quad \pi<j_{\nu, k+1}-j_{\nu, k}<2\pi\quad\text{for all $\,k\ge N.$}$$

For $\,k\ge N,$ these inequalities clearly imply that
\begin{align*}
\int_{j_{\nu, k}}^{j_{\nu, k+1}} \left| \cos \left(t- \frac{\nu\pi}{2}-\frac{\pi}{4}\right)\right|\,dt \le 2\pi,\quad
\int_{j_{\nu, k}}^{j_{\nu, k+1}} \frac{\,dt\,}{t}  \le \frac{4}{k},
\end{align*}
whence it follows from Hankel's formula \eqref{PH2} that
$$\lim_{k\to\infty} \int_{j_{\nu, k}}^{j_{\nu, k+1}} f(t/x) |u(t)|\,dt =0, \quad x>0,$$
under the assumption $\,f(t)\to 0\,$ as $\,t\to\infty.$ It is thus verified that (P2) holds true and
Theorem \ref{theoremM1} gives the desired positivity.
\end{proof}

As readily observed, Theorem \ref{theoremH1} gives
\begin{equation*}
\int_0^\infty \frac{J_\nu(xt) \sqrt{xt}\,}{\,t^\gamma(t^2 + a^2)^\delta} \,dt\,>0,\quad x>0,\,a>0,
\end{equation*}
valid for $\,\nu\ge 1/2\,$ and $\,\gamma =0,\,\delta>0\,$ or $\,0<\gamma<\nu+3/2,\,\delta\ge 0,$ which
includes the following special cases  \cite[(3), (11), (17), 8.5]{E} :
\begin{align*}
\int_0^\infty J_\nu(xt)\,dt &= \frac{1}{x},\\
\int_0^\infty \frac{J_\nu(2xt)\,dt\,}{\sqrt{t^2 + a^2}}\,&=
I_{\nu/2}(ax) K_{\nu/2}(ax),\\
\int_0^\infty \frac{J_\nu(2xt)\,dt\,}{\, t^\nu (t^2 + a^2)^{\nu+1/2}}\, &= \frac{(2/a)^{2\nu}\Gamma(\nu+1)}
{\Gamma(2\nu+1)}\,x^{\nu}  I_{\nu}(ax) K_{\nu}(ax).
\end{align*}

By applying instead Theorem \ref{theoremF}, to be established in the sequel,
it is possible to extend the stated range of positivity to the case $\,\nu>-1.$

\subsection{Alternative: The case $\,0<\nu<1/2$}
While we have investigated positivity of Hankel transforms in the cases
$\,\nu>1/2\,$ or $\,\nu=\pm 1/2\,$ on the basis of comparing the wave of arches,
such a method is no longer suitable in dealing with other cases due to our restrictive assumptions
on the kernel and $\phi(t)$.  As an alternative, we consider the kernel of type $\,u(t) = \sqrt t J_\nu\left(t^\alpha\right)\,$
which arises as a solution of \eqref{ST1} with
\begin{align}\label{AH1}
\phi(t) = \frac{1}{t^2}\left(\alpha^2 t^{2\alpha} + \frac{1-4 \alpha^2\nu^2}{\,4\,}\right), \quad t>0.
\end{align}

When $\,\alpha>1,\,\alpha|\nu|\ge 1/2,$ $\phi(t)$ is strictly increasing. Since
\begin{equation*}
u(t) = \sqrt t J_\nu\left(t^\alpha\right) \,\sim\,\frac{ t^{\alpha\nu+1/2}\,}{2^\nu \Gamma(\nu+1)} \quad\text{as $\,t\to 0+,\,$}
\end{equation*}
if $\,\alpha\nu+1/2>0,$ then $\,u(0+) =0, \,u(t)>0\,$ for $\,0<t<j_{\nu, 1}^{1/\alpha}\,$ and
(P1) is equivalent to the integrability of $\,f(t)  t^{\alpha\nu+1/2}\,$ for $\,0<t<1.$

In view of Hankel's asymptotic formula
\begin{equation*}
u(t) = \sqrt{\frac{2}{\pi}}\,t^{\frac{1-\alpha}{2}}\left[\cos \left(t^\alpha- \frac{\nu\pi}{2}-\frac{\pi}{4}\right) +  O\big(t^{-\alpha}\big)\right]
\end{equation*}
as $\,t\to \infty\,$ and a modification of \eqref{PH1} in the case $\,|\nu|\le 1/2,$
\begin{equation*}
j_{\nu, 2} - j_{\nu, 1}\le j_{\nu, 3}-j_{\nu, 2} \le \cdots\,\to\,\pi,
\end{equation*}
it is easy to confirm that (P2) holds with no further assumptions on $f(t)$.

It is thus shown by Theorem \ref{theoremM1} that
\begin{equation*}
(Uf)(x) = \int_0^\infty f(t) J_\nu\big[(xt)^\alpha\big] \sqrt{xt}\,dt>0 \qquad (x>0),
\end{equation*}
valid for $\,\nu>0,\,\alpha>1\,$ with $\,\alpha\nu\ge 1/2,$
provided that $f(t)$ is a nonnegative, sectionally continuous and decreasing function satisfying
$$ \int_0^1 f(t)\, t^{\alpha\nu + 1/2}\,dt <\infty.$$

By changing variables appropriately, we find that
\begin{equation*}
\alpha x^{\frac{\alpha-1}{2\alpha}} (Uf)(x^{\frac 1\alpha})
= \int_0^\infty f(t^{\frac 1\alpha}) t^{\frac{3(1-\alpha)}{2\alpha}} J_\nu(xt) \sqrt{xt}\,dt>0.
\end{equation*}
In the case $\,0<\nu<1/2,$ if we choose $\,\alpha = 1/2\nu,$ then this alternative method yields
an extension of Theorem \ref{theoremH1} which reads as follows.

\begin{theorem}\label{theoremH2}
For $\,0<\nu< 1/2,$ we have $\,(H_\nu f)(x)>0\,$ for all $\,x>0\,$
if $f(t)$ is a nonnegative sectionally continuous function such that
it does not vanish identically, $\, t^{3/2-3\nu} f(t)\,$ is decreasing for $\,t>0\,$ and
$$  \int_0^1 t^{\nu+1/2}f(t)\,dt<\infty .$$
\end{theorem}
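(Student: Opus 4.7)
The plan is to derive Theorem~\ref{theoremH2} as a direct specialization of the alternative scheme described in the preceding paragraphs, with the exponent chosen as $\alpha = 1/(2\nu)$. The assumption $0 < \nu < 1/2$ guarantees $\alpha > 1$ and $\alpha\nu = 1/2$, which is precisely the borderline case in which the function $\phi(t)$ of \eqref{AH1} reduces to $\alpha^2 t^{2\alpha - 2}$, a smooth, strictly increasing function obeying (A1). Correspondingly, the kernel $u(t) = \sqrt{t}\,J_\nu(t^\alpha)$ has $u(0+) = 0$ and $u(t) > 0$ on $\bigl(0, j_{\nu,1}^{1/\alpha}\bigr)$, so Theorem~\ref{theoremM1} applies once (P1), (P2) and the decreasing property of the test function are verified.

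Given $f$ satisfying the hypotheses of Theorem~\ref{theoremH2}, I would introduce the auxiliary function
$$
\tilde f(s) := f\bigl(s^{1/(2\nu)}\bigr)\, s^{3/(4\nu) - 3/2}, \quad s > 0,
$$
and aim to check that Theorem~\ref{theoremM1} yields $(U\tilde f)(y) > 0$ for every $y > 0$, where $(U\tilde f)(y) = \int_0^\infty \tilde f(s)\, J_\nu\bigl((ys)^\alpha\bigr) \sqrt{ys}\,ds$. The identity displayed just before the theorem then converts this into the Hankel transform, since
$$
\alpha\, x^{(\alpha-1)/(2\alpha)} (U\tilde f)\bigl(x^{1/\alpha}\bigr) = (H_\nu f)(x),
$$
so positivity of $(U\tilde f)$ forces positivity of $(H_\nu f)$.

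The three verifications for $\tilde f$ reduce to the hypotheses of Theorem~\ref{theoremH2} under the substitution $r = s^{1/(2\nu)}$. First, $\tilde f$ is nonnegative and sectionally continuous since $f$ is. Second, expressing $\tilde f$ in the $r$-variable as $r^{3/2 - 3\nu} f(r)$ shows that $\tilde f$ is decreasing in $s$ if and only if $r^{3/2 - 3\nu} f(r)$ is decreasing in $r$, which is exactly the monotonicity hypothesis. Third, because $u(t) = \sqrt{t}\,J_\nu(t^\alpha) \sim c\, t^{\alpha\nu + 1/2} = c\, t$ near $0$, condition (P1) amounts to $\int_0^1 \tilde f(s)\,s\,ds < \infty$, and the same substitution gives
$$
\int_0^1 \tilde f(s)\, s\,ds \;=\; 2\nu \int_0^1 r^{\nu + 1/2} f(r)\,dr,
$$
which is finite by the integrability assumption. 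Condition (P2) is automatic from the decay $|u(t)| \lesssim t^{(1-\alpha)/2}$ and the spacing of the zeros $\zeta_k = j_{\nu,k}^{1/\alpha}$ examined in the general discussion preceding the theorem.

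The main obstacle is not conceptual but bookkeeping: one must choose the exponent of $s$ in $\tilde f$ and the prefactor $x^{(\alpha-1)/(2\alpha)}$ so that every factor conspires to restore an exact Hankel transform on the right-hand side, and one must trace the monotonicity and integrability hypotheses faithfully through the power substitution. Once the exponents are tracked carefully, the conclusion is an immediate consequence of Theorem~\ref{theoremM1} applied to $\tilde f$.
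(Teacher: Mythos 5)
Your proposal is correct and follows essentially the same route as the paper: both take the kernel $u(t)=\sqrt{t}\,J_\nu(t^\alpha)$ with $\alpha=1/(2\nu)$, verify the hypotheses of Theorem~\ref{theoremM1} (strict monotonicity of $\phi$, (P1), (P2), and the decreasing property, which under $r=s^{1/(2\nu)}$ is exactly the assumption that $r^{3/2-3\nu}f(r)$ decreases), and then undo the power substitution to recover $(H_\nu f)(x)$. The only difference is presentational --- you start from the given $f$ and build the auxiliary $\tilde f$ to feed into $U$, whereas the paper states the $U$-positivity first and reads off the Hankel statement --- and your exponent bookkeeping checks out.
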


\begin{remark}
In \cite{Ma2}, Makai exploited the scaled function $\,u(t) =\sqrt t J_\nu\left(\beta t^{1/2\nu} \right)\,$
in proving that $\,j_{\nu, k}/\nu\,$ is a strictly decreasing function of $\,\nu>0.$
\end{remark}

We illustrate with Gegenbauer's formula
\begin{equation}\label{G}
\int_0^\infty e^{-bt} J_\nu(xt) \frac{dt}{t} = \frac{\big[\sqrt{x^2 + b^2} - b\big]^\nu\,}{\nu x^\nu}>0,
\end{equation}
valid for all $\,x>0\,$ and $\,b>0,\,\nu>0\,$  (see \cite[(2), 8.6]{E}, \cite[13.2]{Wa}).

\begin{itemize}
\item In the case $\,\nu\ge 1/2,$ we identify
\begin{align*}
\int_0^\infty e^{-bt} J_\nu(xt) \frac{dt}{t} = \frac{1}{\sqrt x}\big(H_\nu f\big)(x),\,\,
f(t) = t^{-\frac 32} e^{-bt}.
\end{align*}
As elementary, $f(t)$ is positive, continuous, strictly decreasing and $\,f(t)\to 0\,$ as $\,t\to\infty.$ In addition,
it is trivial to see that
$$\int_0^1 t^{\nu+1/2} f(t) dt =\int_0^1 t^{\nu-1} e^{-bt} dt<\infty.$$
Thus it follows by theorems \ref{theoremT}, \ref{theoremH1} that $\,\big(H_\nu f\big)(x)>0.$

\item In the case $\,0<\nu<1/2,$ we note that the associated function
$$t^{3/2-3\nu} f(t) = t^{-3\nu} e^{-b t},\quad t>0, $$
is decreasing and hence $\,\big(H_\nu f\big)(x)>0\,$ by Theorem \ref{theoremH2}.
\end{itemize}

\subsection{The general case $\,\nu>-1$}
By exploiting the recurrence relation of Bessel functions, it is possible to deduce positivity of Hankel transforms
in the general case $\,\nu>-1\,$ on the basis of the preceding two theorems.

\begin{theorem} \label{theoremF}
For $\,\nu>-1,$ suppose that $f(t)$ is a nonnegative, decreasing and sectionally smooth function for $\,t>0\,$ satisfying
\begin{equation}\label{GP2}
\int_0^1 t^{\nu+1/2}f(t)dt<\infty \quad \textit{and}\quad \lim_{t\to \infty}f(t)=0.
\end{equation}
Put
\begin{align*}
g(t) &= -t^{\nu+1/2} \frac{d}{dt} \left[ t^{-\nu-1/2} f(t)\right]\\
&= \left(\nu+\frac12\right) \frac{f(t)}{t} - f'(t),\quad t>0.
\end{align*}
If both $f(t), g(t)$ do not vanish identically for $\,t>0,$ then $\,(H_\nu f)(x)>0\,$ for all $\,x>0\,$
under each of the following additional case-assumptions.

\begin{itemize}
\item[\rm(i)] For $\,\nu>-1/2,$ $g(t)$ is nonnegative, decreasing and $\, \lim_{t\to \infty} g(t)=0.$
\item[\rm(ii)] For $\,\nu = -1/2,$ $f(t)$ is strictly decreasing and convex on its support.
\item[\rm(iii)] For $\,\nu<-1/2,$ $g(t)$ is nonnegative and $t^{-3/2-3\nu} g(t)$ is decreasing.
\end{itemize}
\end{theorem}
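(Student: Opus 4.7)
My plan is to carry out a single integration by parts, based on the Bessel recurrence $\frac{d}{dt}\bigl[t^{\nu+1}J_{\nu+1}(xt)\bigr]=x\,t^{\nu+1}J_\nu(xt)$, to derive the key identity
\[
x\,(H_\nu f)(x) \,=\, (H_{\nu+1} g)(x), \qquad x>0,
\]
and then to invoke the theorem tailored to the shifted index $\nu+1$: Theorem \ref{theoremH1} when $\nu+1>1/2$, Corollary \ref{corollaryT} when $\nu+1=1/2$, and Theorem \ref{theoremH2} when $0<\nu+1<1/2$. This is exactly why the case split in the statement is placed along the same thresholds that appeared in the direct treatment of Section 5.

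To derive the identity, I would factor $t^{1/2} J_\nu(xt) = t^{-\nu-1/2}\cdot t^{\nu+1}J_\nu(xt) = x^{-1}\,t^{-\nu-1/2}\frac{d}{dt}\bigl[t^{\nu+1}J_{\nu+1}(xt)\bigr]$ inside $(H_\nu f)(x)=\sqrt{x}\int_0^\infty f(t) t^{1/2} J_\nu(xt)\,dt$ and integrate by parts. Because $\frac{d}{dt}\bigl[t^{-\nu-1/2} f(t)\bigr]=-t^{-\nu-1/2}g(t)$ by the very definition of $g$, the resulting bulk integral is exactly $x^{-1}(H_{\nu+1} g)(x)$. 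The boundary contribution $\bigl[f(t) t^{1/2} J_{\nu+1}(xt)\bigr]_0^\infty$ has to be shown to vanish. At infinity this follows from $t^{1/2}J_{\nu+1}(xt)=O(1)$ (Hankel's asymptotic) together with $\lim_{t\to\infty} f(t)=0$. At the origin, $J_{\nu+1}(xt)\sim (xt/2)^{\nu+1}/\Gamma(\nu+2)$ makes the contribution of order $f(t) t^{\nu+3/2}$, and the standard comparison
\[
\frac{t^{\nu+3/2} f(t)}{\nu+3/2} \,\le\, \int_0^t s^{\nu+1/2} f(s)\,ds \,\longrightarrow\, 0 \quad \text{as }t\to 0^+,
\]
valid because $f$ is decreasing and \eqref{GP2} holds, delivers the vanishing.

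Once the reduction is established, the three cases fall out by case analysis. In case (i) the hypotheses on $g$ are precisely those of Theorem \ref{theoremH1} at index $\nu+1>1/2$; only the integrability $\int_0^1 t^{\nu+3/2} g(t)\,dt<\infty$ needs checking, and an integration by parts rewrites this in terms of the finite quantity $\int_0^1 t^{\nu+1/2} f(t)\,dt$ and a finite boundary value $f(1)$. In case (ii), $(H_{1/2} g)$ is a Fourier sine transform and the stated hypotheses on $f$ are exactly those of Corollary \ref{corollaryT} applied to $(H_{-1/2} f)=\sqrt{2/\pi}\int_0^\infty f(t)\cos(xt)\,dt$, so the conclusion is immediate. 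In case (iii) the monotonicity hypothesis ``$t^{-3/2-3\nu}g(t)$ decreasing'' is exactly ``$t^{3/2-3(\nu+1)}g(t)$ decreasing,'' the assumption of Theorem \ref{theoremH2} at the shifted index $\nu+1\in(0,1/2)$, while sectional continuity of $g$ is inherited from the sectional smoothness of $f$. The main obstacle is really the bookkeeping: justifying the boundary term at zero, transferring the integrability hypothesis from $f$ to $g$, and verifying the remaining technical conditions, all performed uniformly so that the argument covers the whole range $\nu>-1$.
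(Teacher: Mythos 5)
Your proposal is correct and follows essentially the same route as the paper: the identical integration by parts via $\frac{d}{dt}\bigl[t^{\nu+1}J_{\nu+1}(xt)\bigr]=x\,t^{\nu+1}J_\nu(xt)$ yielding $x(H_\nu f)(x)=(H_{\nu+1}g)(x)$, the same treatment of the boundary term at $0$ and $\infty$ (including the comparison showing $t^{\nu+3/2}f(t)\to 0$), and the same delegation of the three cases to Theorem \ref{theoremH1}, Corollary \ref{corollaryT}, and Theorem \ref{theoremH2} with the integrability of $t^{\nu+3/2}g(t)$ transferred back to $f$ by a second integration by parts.
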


\begin{proof}
By using the differentiation formula \cite[3.2]{Wa}
\begin{equation*}
    \frac{d}{dt} \left[t^{\nu+1}J_{\nu+1}(t)\right] = t^{\nu+1}J_{\nu}(t),\quad t>0,
\end{equation*}
we fix $\,x>0\,$ and integrate by parts to obtain
\begin{align}\label{GP3}
   \left(H_\nu f\right)(x)  & =\frac{1}{\sqrt x} \int_0^\infty t^{-\nu-1/2}f(t) \frac{d}{dt} \left[t^{\nu+1}J_{\nu+1}(xt)\right] dt\nonumber\\
    &=\frac{1}{\sqrt x} \,\Big[t^{1/2}f(t)J_{\nu+1}(xt)\Big]_{t=0}^\infty\nonumber\\
    &\quad\qquad -\frac 1x \int_0^\infty t^{\nu+1/2} \frac{d}{dt} \left[ t^{-\nu-1/2} f(t)\right] J_{\nu+1}(xt) \sqrt{xt}\,dt\nonumber\\
    &=\frac 1x \int_0^\infty g(t) J_{\nu+1}(xt) \sqrt{xt}\,dt =\frac{1}{x} \left(H_{\nu+1} g\right)(x),
\end{align}
provided that $\,G(t):= t^{1/2}f(t)J_{\nu+1}(xt)\,$ tends to zero as $t$ approaches zero or infinity.
To justify, we first note that $\,t^{1/2} J_{\nu+1}(xt)\,$ is bounded for large $t$, due to Hankel's formula \eqref{PH2},
and hence the second assumption of \eqref{GP2} indicates $\,G(t)\to 0\,$ as $\,t\to\infty.$ Next,
by inspecting the first assumption of \eqref{GP2} and monotonicity of $f(t)$,
it is not difficult to see that $\, t^{\nu+3/2} f(t) \to 0\,$ as $\,t\to 0+.$ By combining with the initial behavior
$$ t^{1/2} J_{\nu+1} (xt) \,\sim\, \frac{x^{\nu+1} t^{\nu+3/2}}{\Gamma(\nu+1) 2^{\nu+1}\,}\quad\text{as $\,t\to 0+,$}$$
we find that $\,G(t) \to 0\,$ as $\,t\to 0+\,$ and thus \eqref{GP3} is legitimate.

Concerning part (i), Theorem \ref{theoremH1} asserts that $\,(H_{\nu+1} g)(x)>0\,$ when $g(t)$ is nonnegative, sectionally continuous,
decreasing with
$$\int_0^1 t^{\nu+3/2} g(t) dt <\infty\quad\text{and}\quad \lim_{t\to\infty} g(t) =0.$$
On integrating by parts, it is easy to observe that
\begin{equation}\label{GP4}
\int_0^1 t^{\nu+3/2} g(t) dt = - f(1) + (2\nu+2)\int_0^1 t^{\nu+1/2} f(t) dt<\infty
\end{equation}
and the result follows. Part (ii) is equivalent to Corollary \ref{corollaryT}. In the case $\,\nu<-1/2,$ that is, $\,0<\nu+1<1/2,$
since Theorem \ref{theoremH2} indicates that $\,(H_{\nu+1} g)(x)>0\,$ when $g(t)$ is nonnegative, sectionally continuous, subject to condition \eqref{GP4}
and $\,t^{-3/2-3\nu} g(t)\,$ is decreasing, the result follows.
\end{proof}

As an illustration, we take $\,f(t) = t^{-\beta},\,t>0,$ and consider
\begin{align}\label{GP5}
(H_\nu f)(x) &= \int_0^\infty t^{-\beta} J_\nu(xt) \sqrt{xt}\,dt\nonumber\\
& =2^{-\beta+1/2}x^{\beta-1}\frac{\,\Gamma\big((2\nu-2\beta+3)/4\big) \,}{\,\Gamma\big((2\nu+2\beta+1)/4\big)\,},
\end{align}
valid for $\, \nu>-1,\,0<\beta<\nu+3/2\,$ (\cite[8.5, (7)]{E}, \cite[1.4, (4.6)]{Ob}). Since
$$g(t) = (\nu+\beta+1/2)t^{-\beta-1},\quad t>0,$$
it is elementary to find by applying Theorem \ref{theoremF} that the Hankel transform $(H_\nu f)(x)$ given by \eqref{GP5}
is positive for all $\,x>0\,$ if
\begin{equation*}
\left\{\begin{aligned}
&{\,\,\, 0<\beta<\nu +3/2\quad\text{for $\,\,\,\nu\ge -1/2,$}}\\
&{-\nu-1/2<\beta<\nu+3/2\quad\text{for $\,\, -1<\nu<-1/2,$}}
\end{aligned}\right.
\end{equation*}
the optimal range of $\,\nu, \beta\,$ for positivity in this example.

\section{Y-transforms}
As another oscillatory solution of the differential equation \eqref{ST1} with \eqref{SB1},
if we take  $\,u(t) = \sqrt t \,Y_\nu(t),$ where $Y_\nu(t)$ denotes a Neumann function or the Bessel function of the second kind
defined by
\begin{equation*}
    Y_\nu(t)=\frac{\,\cos{\nu\pi}J_\nu(t)-J_{-\nu}(t)\,}{\sin{\nu\pi}},
\end{equation*}
then the corresponding oscillatory integral takes the form
\begin{align}\label{NT1}
(N_\nu f)(x) &= \int_0^\infty f(t) Y_{\nu}(xt)\sqrt{xt}\,dt\nonumber\\
&=\cot \nu\pi\,(H_\nu f)(x) - \csc \nu\pi\,\left(H_{-\nu} f\right)(x),
\end{align}
often referred to as the $Y$-transform of order $\nu$ of $f(t)$ (see \cite{E}, \cite{Ob}).

Concerning positivity, we remark that Theorem \ref{theoremM1} is not applicable to
$Y$-transforms. Indeed, while $\phi(t)$ is increasing only when $\,|\nu|\ge 1/2,$ it is easy to see that
$\,|u(t)|= \sqrt t \,|Y_\nu(t)|\,$ tends to infinity as $t$ approaches zero when $\,|\nu|>1/2.$ Nevertheless,
it is possible to deduce positivity or negativity of $(N_\nu f)(x)$ by making use of
the relation \eqref{NT1} in the case $\,1/2<|\nu|<1,$ provided that both
$\, (H_\nu f)(x),\,(H_{-\nu} f)(x)\,$ are positive at the same time: It is positive for $\,-1<\nu<-1/2\,$
and negative for $\,1/2<\nu<1.$

In practice, it is straightforward to obtain the following positivity result for $Y$-transforms on
applying Theorem \ref{theoremM1} and Theorem \ref{theoremF}.

\begin{theorem}
For $\,1/2<|\nu|<1,$ let $f(t)$ be a nonnegative, decreasing and sectionally continuous function
such that it does not vanish identically,
$$\int_0^1 t^{-|\nu|+1/2}f(t) dt<\infty \quad \textit{and}\quad \lim_{t\to \infty}f(t)=0.$$
If $g(t)$ is nonnegative and $\,t^{-3/2 + 3|\nu|} g(t)\,$ is decreasing for $\,t>0,$ where
$$ g(t) = \left(-|\nu| + \frac 12\right) \frac{f(t)}{t} - f'(t),\quad t>0,$$
then the following inequalities hold true for each $\,x>0:$
\begin{equation*}
\left\{\begin{aligned}
&{\,\,\,(N_\nu f)(x)>0\quad\text{for $\,\,-1<\nu<-1/2,$}}\\
&{\,\,\,(N_\nu f)(x)<0\quad\text{for $\,\,1/2<\nu< 1.$}}
\end{aligned}\right.
\end{equation*}
\end{theorem}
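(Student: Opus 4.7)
The plan is to exploit the linear decomposition
\[
(N_\nu f)(x) = \cot(\nu\pi)\,(H_\nu f)(x) - \csc(\nu\pi)\,(H_{-\nu} f)(x)
\]
recorded in \eqref{NT1}: I will establish that both Hankel transforms on the right are nonnegative under the stated hypotheses, with at least one of them strictly positive, and then read off the sign of $(N_\nu f)(x)$ from the signs of the two trigonometric coefficients on each of the intervals $(-1,-1/2)$ and $(1/2,1)$.

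Setting $\mu = |\nu| \in (1/2, 1)$, the first substep is to verify $(H_\mu f)(x) > 0$ via Theorem \ref{theoremH1}. The only nonobvious check is $\int_0^1 t^{\mu+1/2} f(t)\,dt < \infty$, which follows at once from the assumed $\int_0^1 t^{-\mu+1/2} f(t)\,dt < \infty$ together with the trivial bound $t^{\mu+1/2} \le t^{-\mu+1/2}$ on $(0,1)$. The second substep is to verify $(H_{-\mu} f)(x) \ge 0$ via part (iii) of Theorem \ref{theoremF} applied with the order parameter taken to be $-\mu \in (-1,-1/2)$: the auxiliary function produced by the recipe in Theorem \ref{theoremF} is
\[
\bigl(-\mu + \tfrac{1}{2}\bigr)\frac{f(t)}{t} - f'(t),
\]
which coincides exactly with the $g$ defined in the present statement, and the case-assumption asking $g$ nonnegative with $t^{-3/2 - 3(-\mu)} g(t) = t^{-3/2 + 3|\nu|} g(t)$ decreasing is precisely what is hypothesized here; the integrability and vanishing-at-infinity conditions \eqref{GP2} transcribe verbatim from the current statement.

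The final step is sign bookkeeping. For $\nu \in (-1, -1/2)$ both $\cos(\nu\pi)$ and $\sin(\nu\pi)$ are negative, so $\cot(\nu\pi) > 0$ and $\csc(\nu\pi) < 0$; since $(H_\nu f)(x) = (H_{-\mu} f)(x) \ge 0$ and $(H_{-\nu} f)(x) = (H_\mu f)(x) > 0$, both terms of \eqref{NT1} are nonnegative with the second strictly positive, giving $(N_\nu f)(x) > 0$. For $\nu \in (1/2, 1)$ instead $\cot(\nu\pi) < 0$ and $\csc(\nu\pi) > 0$, while $(H_\nu f)(x) = (H_\mu f)(x) > 0$ and $(H_{-\nu} f)(x) = (H_{-\mu} f)(x) \ge 0$, so both terms are nonpositive with the first strictly so, and $(N_\nu f)(x) < 0$. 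No serious obstacle arises; the only modest wrinkle is that Theorem \ref{theoremF} is stated with the proviso $g \not\equiv 0$, but if $g$ happens to vanish identically then \eqref{GP3} forces $(H_{-\mu} f)(x) = 0$ and the strict inequality for $(N_\nu f)(x)$ still survives, since in each of the two sign regimes the surviving Hankel contribution is the one guaranteed strictly positive by Theorem \ref{theoremH1}.
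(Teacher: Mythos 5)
Your proposal is correct and follows exactly the route the paper intends: decompose $(N_\nu f)(x)$ via \eqref{NT1}, get $(H_{|\nu|}f)(x)>0$ from Theorem \ref{theoremH1} and $(H_{-|\nu|}f)(x)\ge 0$ from Theorem \ref{theoremF}(iii), then read off the signs of $\cot\nu\pi$ and $\csc\nu\pi$ on each interval. Your explicit treatment of the case $g\equiv 0$ (which the paper's hypotheses do not exclude) is a welcome extra detail beyond what the paper records.
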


\bigskip

\noindent
{\bf Acknowledgements.} We are grateful to anonymous referees for their suggestions which helped greatly
improve the original version of the present paper.

\bigskip

\noindent
{\bf Funding.} The research by Yong-Kum Cho was supported by
the National Research Foundation of Korea funded by
the Ministry of Science and ICT (2021R1A2C1007437). The research by Young Woong Park
was supported by the Chung-Ang University Graduate Research Scholarship in 2021.

\bigskip

\noindent
{\bf Disclosure Statement.} No potential conflict of interest was reported by the authors.

\end{document}